\newtheorem{mydef}{Definition}[section]
\begin{document}

\title{Non uniform weighted extended B-Spline finite element analysis  of non linear elliptic partial differential equations. 
}

\titlerunning{NU-WEBS on p-Laplacian}        

\author{B.V.Rathish Kumar         \and
        Ayan Chakraborty 
}

\authorrunning{Rathish Kumar and Chakraborty} 

\institute{B.V.Rathish Kumar  \at
              Department of Mathematics and Statistics \\ Indian Institute of Technology Kanpur \\
                            \email{bvrk@iitk.ac.in}           
           \and
           Ayan Chakraborty \at
              IIT Kanpur\\
              \email{ayancha@iitk.ac.in}
}

\date{Received: date / Accepted: date}

\maketitle

\begin{abstract}

 We propose a non uniform web spline based  finite element analysis  for elliptic partial differential equation  with the gradient type nonlinearity in their principal coefficients like p-laplacian equation and Quasi-Newtonian fluid flow equations. We discuss the well-posednes of the problems and also derive the apriori error estimates for the proposed finite element analysis and obtain convergence rate of $\mathcal{O}(h^{\alpha})$ for $\alpha > 0$.
\keywords{finite element \and non uniform web-spline \and error estimates}

\end{abstract}

\section{Introduction}

Finite element method is one of the popular numeraical techniques for solving partial differential equation's modeling real life problems from science and engineering. Currently  there is marked interest for meshless approach for solving boundary value problems as it significantly saves the cost and trouble of generating mesh, which infinitesimal  in many cases may turn out to be the computationally the most expensive job.Weighted extended B-splines is a finite element method (fem)  in a infinitesimal cost mesh framework. The present work on nonlinear elliptic problems is based on non-uniform weighted extended b-splines (NUWEBS)fem this which was originally proposed  by H$\ddot{o}$llig et al \cite{hollig2001weighted,hollig2003nonuniform,hollig2003finite}on trivial mesh framework.
 The p-laplacian equation  used into the design of shock free airfoil and non-Newtonian fluid flow model used in understanding  seepage through coarse grained porous media in some geological problems etc  have gradient type non linearity in their principal coefficients. They also occur  in the description of non linear diffusion and  filtration \cite{repin2000posteriori} , power law  materials \cite{murthy2015parallel} and Quasi Newtonian flows \cite{barrett1994quasi}.Earlier in a grid based framework mixed finite element methods were developed and analyzed in \cite{melenk1996partition,arbogast1992existence} for elliptic problems. Here we are concerned  with the finite element analysis in a gridless framework and provide the convergence analysis of weighted extended b-spline finite element analysis for p-laplacian equation and Quasi-Newtonian flow model.\\
An outline of the paper is as follows. We present some preliminary knowledge on the non-uniform weighted extended b-spline (WEB-S) space in section~(\ref{sec1})   and establish an optimal order a priori error estimates of p-Laplacian problem in section~(\ref{secP}).We discuss Quasi -Newtonian problems in~( \ref{secQ}).Furthermore,throughout our discussion $\Omega$ is a bounded, multiply connected domian , i.e, it may contain holes and we use the symbols $\preceq ,\succeq~\text{or}~\asymp$  instead $ \le, \ge ~\text{or} ~=$ whenever the constants are clear from the context and independent of the parameters. 
\section{Non uniform weighted extended B-Splines}
\label{sec1}
Let $\Omega_h$ be a webs fem approximation to $\Omega$ defined by $\Omega_h = \cup_i\Omega_{h_i}$
where $\Omega_{h_i}$ is a partitioning of $\Omega_h$ into a finite number of disjoint open regular
domains , each of maximum diameter bounded above by h . In addition, for
any two distinct domains, their closures are either disjoint, or have a common
boundaries. Associated with $\Omega_h$ is the finite-dimensional space $\mathcal{B}_h$ ( see below)\\

We can approximate a function on a bounded domain $\Omega \subset \mathbb{R}^n$ by forming a spline, i.e., a linear combination of all relevant B-splines
\begin{equation}
b_k ~~, k \in \mathbb{K} \nonumber
\end{equation}
which have some support in $\Omega$. Depending on the degree, this yields approximations of arbitrary order
and smoothness. However, numerical instabilities may arise due to the outer B-splines
\begin{equation}
b_j~~,j \in \mathbb{J} \nonumber
\end{equation}
for which no complete grid cell of their support lies in $\Omega$. Here and in the sequel, a grid cell is an interval
which in every coordinate direction is bounded by two consecutive, but different knots, and an inner grid
cell is a grid cell whose interior is completely contained in $\Omega$. A further difficulty is that, in general,
splines do not conform to homogeneous boundary conditions, which is essential for standard finite
element schemes \cite{zienkiewicz2000finite} or for matching boundaries in data fitting problems.
Fortunately, both problems can be resolved. A stable basis is obtained by forming appropriate
extensions of the inner B-splines
\begin{equation}
b_i~~, i \in \mathbb{I} = \mathbb{K}\sim \mathbb{J} \nonumber
\end{equation}
which have at least one inner grid cell in their support. Readers are refer to \cite{hollig2003finite,chaudhary2015web,de2013box} for details description.

\begin{figure}
\centering
\begin{minipage}{.5\textwidth}
  \centering
  \includegraphics[width=5cm, height=4 cm]{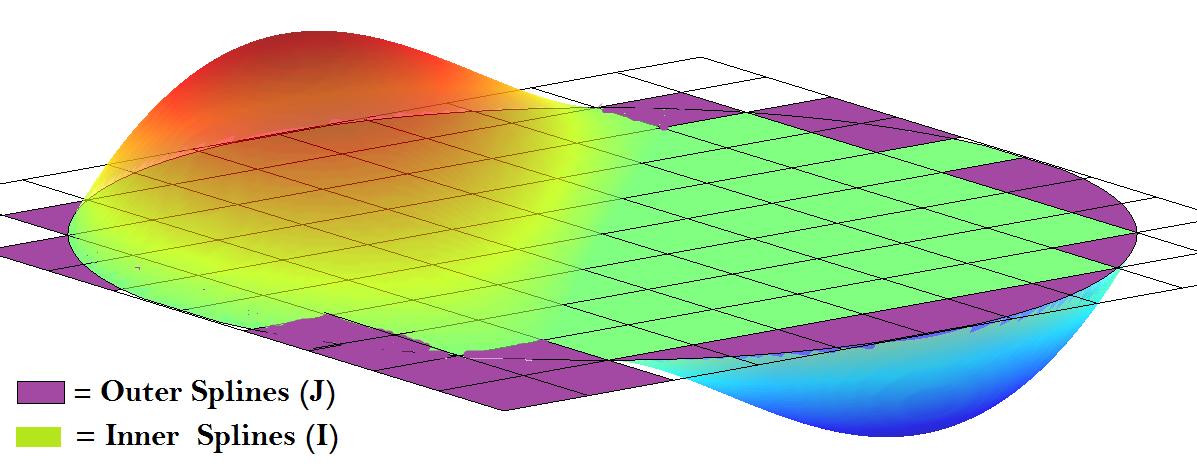}
  \captionof{figure}{WEB-Splines }
  \label{fig:test1}
\end{minipage}%
\begin{minipage}{.5\textwidth}
  \centering
  \includegraphics[width=5cm, height=4 cm]{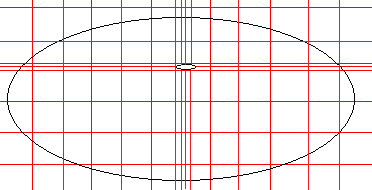}
  \captionof{figure}{Non-uniform WEBS}
  \label{fig:test2}
\end{minipage}
\end{figure}

\subsection{Splines on Bounded Domains}
The Splines $\mathbb{B}^n_h(D)$ on a bounded domain D$\subset$ $\mathbb{R}^m$ consist of all linear combinations $\sum\limits_{k\epsilon \mathbf{K}} c_k b^n_{k,h}$ of relevant B-Splines; i.e, the set $\mathbf{K}$ of relevant indices contains all k with $b^n_{k,h}(x)\ne$0 for some x $\epsilon$ D,where $b^n_{k,h}(x)= b^n (x/h-k)$ is the scaled translates. \\

\subsection{Inner and Outer Splines}
Grid cells $\mathbb{Q}$ = h ($[0,1]^m$ + $l$) are partitioned into interior, exterior and boundary cells depending on whether $\mathbb{Q}\subseteq\bar{D}$ , the interior of $\mathbb{Q}$ intersects $\partial D$ , or $\mathbb{Q}\cap D = \phi $. Among the relevant B-Splines,\textit{$b_k,k$}$\varepsilon $ \textbf{K}, distinction  made between inner B-Splines
 \begin{center}
 \textit{$b_i$,i} $\varepsilon $ \textbf{I}
  \end{center}
  which have at least one interior cell in their support, and outer B-Splines
   \begin{center}
   \textit{$b_j$,j} $\varepsilon $ \textbf{J=K$\backslash$I}
   \end{center}
   for which supp \textit{$b_j$} consists entirely of boundary and exterior cells.\\
\begin{theorem} The Spline
\begin{center}
$p=\sum_{k \varepsilon K} q(k)b_k$
\end{center}
is a polynomial of order n on D iff $q$ is a polynomial of order n on K.\\
\end{theorem}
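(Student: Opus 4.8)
The plan is to deduce the statement from the classical polynomial–reproduction property of uniform B-splines, whose engine is Marsden's identity. Since the relevant $b_k = b^n(\cdot/h - k)$ are scaled tensor-product translates and a multivariate polynomial is a polynomial in each coordinate, I would first observe that both Marsden's identity and the notion of ``polynomial of order $n$'' factor across the coordinate directions. This reduces everything to the univariate, unit-knot case (the scaling by $h$ being an affine change of variables that preserves polynomial degree), after which the general claim follows by taking tensor products.

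In one variable I would start from Marsden's identity for the integer knot sequence,
\begin{equation}
(y-x)^n = \sum_{k} \psi_k(y)\, b_k(x), \qquad \psi_k(y) = \prod_{\nu=1}^{n}(y-k-\nu),
\nonumber
\end{equation}
and expand $\psi_k(y) = \sum_{j=0}^{n} c_j(k)\, y^{\,n-j}$, where each coefficient $c_j(k)$ is, by Vieta, an elementary symmetric expression in $k+1,\dots,k+n$ and hence a polynomial in $k$ of degree exactly $j$. Matching the coefficient of $y^{\,n-j}$ on both sides (legitimate since the identity holds for all $y$) against the binomial expansion of $(y-x)^n$ yields
\begin{equation}
\sum_{k} c_j(k)\, b_k(x) = \binom{n}{j}(-x)^j, \qquad j=0,1,\dots,n.
\nonumber
\end{equation}
Because $c_0,\dots,c_n$ have degrees $0,1,\dots,n$, they form a basis of the polynomials in $k$ of order $n$, and their images are the monomials $(-x)^j$ up to nonzero constants, a basis of the polynomials in $x$ of order $n$. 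Consequently the linear map $q \mapsto \sum_k q(k) b_k$ carries the space of order-$n$ polynomials in $k$ onto the space of order-$n$ polynomials in $x$, \emph{bijectively and degree-preservingly}; expanding an arbitrary $q$ of order $n$ in the basis $\{c_j\}$ immediately gives the ``$q$ polynomial $\Rightarrow p$ polynomial'' direction together with equality of degrees.

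For the converse I would use the local linear independence of the B-splines $b_k$ over the inner grid cells. Given a spline $p = \sum_{k\in K} q(k) b_k$ that happens to agree with a polynomial of order $n$ on $D$, the bijection above produces a \emph{polynomial} coefficient sequence $q^\ast$ of order $n$ with $\sum_k q^\ast(k) b_k = p$ in the interior; subtracting, $\sum_k \bigl(q(k)-q^\ast(k)\bigr) b_k$ vanishes on $D$, so local linear independence forces $q = q^\ast$ on every index whose B-spline meets an inner cell, whence $q$ is polynomial. I expect the genuine work to lie in this converse: one must restrict to inner grid cells so that every contributing B-spline indeed belongs to the relevant set $K$, and invoke local linear independence cell by cell to pin down the coefficients from the values on $D$ alone. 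Establishing Marsden's identity itself, and the bookkeeping identifying ``order $n$ on $K$'' with ``order $n$ on $D$'' under the scaling $x \mapsto x/h - k$, are routine by comparison.
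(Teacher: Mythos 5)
The paper states this theorem without any proof (it is imported from H\"ollig's monograph on B-spline finite elements), so there is no in-paper argument to compare against and your proposal must stand on its own. Your forward direction is sound and is the standard route: Marsden's identity, the observation that the coefficients $c_j(k)$ are (up to sign) elementary symmetric functions of $k+1,\dots,k+n$ and hence polynomials in $k$ of degree exactly $j$, and the resulting degree-preserving bijection between polynomials in $k$ and polynomials in $x$. Only the order/degree bookkeeping needs care: B-splines of degree $n$ reproduce polynomials of degree $\le n$, i.e.\ order $n+1$, so under the paper's ``order $n$'' convention your Marsden product should run over $n-1$ factors; this is cosmetic.

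The genuine gap is in the converse. You invoke local linear independence only over \emph{inner} grid cells and conclude that $q=q^\ast$ ``on every index whose B-spline meets an inner cell.'' Those are precisely the inner indices $i\in I$; but the relevant index set is $K=I\cup J$, and the outer indices $j\in J$ are by definition those whose supports contain \emph{no} inner cell. As written, your argument leaves $q(j)$ undetermined on exactly the indices for which the statement has nontrivial content in the WEB-spline setting. The repair is to apply local linear independence on the sets $Q\cap D$ for boundary cells $Q$ as well: on the open cell $Q$ each contributing $b_k$ restricts to a polynomial, the nonvanishing restrictions are linearly independent on $Q$, and a nontrivial linear relation among polynomials on the nonempty open subset $Q\cap D$ would propagate to all of $Q$; hence linear independence persists on $Q\cap D$, and every B-spline nonvanishing there belongs to $K$ by definition of relevance. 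Running your subtraction argument cell by cell over all cells meeting $D$ (not just the inner ones) then pins down $q(k)=q^\ast(k)$ for every $k\in K$ and closes the proof.
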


We assume that the boundary are smooth so that smooth solution could exist. As usual, the solution is approximated by a linear combination
\begin{center}
 $\sum_i a_i B_i$
\end{center}
of basis functions $B_i$ which vanish outside a set with diameter $\asymp$ h.Moreover, the basis functions are required to vanish on the boundary so we simply multiply by a fixed \textit{weight function w} which satisfy the criteria, and in addition \textit{w} is to be smooth and $\asymp$ dist(x,D).Readers are suggested  \cite{de1988cardinal},\cite{rvachev1995r} for  more details\\

\begin{mydef}[ Extended b (eb) splines ]
For $i\in I(j), j \in J (i)~and ~Q_j$ we denote by $p_{i,j}$ the polynomial which
agrees with $b_i~ on~ Q_j$ and define the extension coefficients
\begin{equation}
e_{i,j}= \lambda_j p_{i,j} \nonumber
\end{equation}
Then, the extended B-splines (eb-splines) are
\begin{equation}
B_i = b_i + \sum\limits_{ j \in J(i)} e_{i,j} b_j ~~ i \in I(j) \nonumber
\end{equation}
where,the set of related inner indices is defined by $I(j)$ for an outer index, and $Q_j$  is an inner grid cell which is closest to supp $b_j~ for ~j \in J$ with respect to the Hausdorff metric , conversely for an inner index we define the set of related outer indices by $J(i)$.
\end{mydef}

\begin{theorem}
\label{thm2}
eb-splines and de Boor–Fix functionals $\{ \lambda_k \}$ are bi-orthogonal,i.e.
\begin{equation}
\lambda_k B_{k'}=\delta_{k,k'}~~~k,k' \in \mathbb{Z}\nonumber
\end{equation}
In addition if $Q$ is an inner grid cell in the support of $B_k$ with length bounded by $|Q| \geq \alpha |supp B_k|$ for some constant $\alpha \in (0,1]$,then
\begin{equation}
|\lambda_k p| \leq const(n,\alpha) ||p||_{\infty,Q}~~ p \in \mathcal{P}_n  \nonumber
\end{equation}
where,$|\cdot| ~  \textrm{represents measure of a set and}~ \mathcal{P}_n$  is a linear space of polynomials of degree $\leq$ n
\end{theorem}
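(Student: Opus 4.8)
The plan is to treat the two assertions separately, building both on the standard bi-orthogonality of the de Boor--Fix functionals with the plain scaled B-splines, $\lambda_k b_{k'} = \delta_{k,k'}$, which I take as the known defining property of the family $\{\lambda_k\}$.

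For the bi-orthogonality of the eb-splines I would simply expand $B_{k'}$ by its definition and invoke linearity of $\lambda_k$. Writing $B_{k'} = b_{k'} + \sum_{j \in J(k')} e_{k',j}\, b_j$ for an inner index $k'$ and applying $\lambda_k$ for an inner index $k$ gives $\lambda_k B_{k'} = \lambda_k b_{k'} + \sum_{j \in J(k')} e_{k',j}\, \lambda_k b_j$. The first term equals $\delta_{k,k'}$; every summand in the second term carries an outer index $j \in J(k')$, so $\lambda_k b_j = \delta_{k,j} = 0$ because $k$ is inner. Hence $\lambda_k B_{k'} = \delta_{k,k'}$, and this step is essentially immediate once the structural fact that the extension sum involves only outer B-splines is combined with the plain bi-orthogonality.

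For the polynomial bound I would use the explicit representation of the de Boor--Fix functional as a finite linear combination $\lambda_k f = \sum_{r=0}^{n} c_{r}\, D^{n-r}\psi_k(\tau)\, D^{r} f(\tau)$, where $\psi_k$ is the dual polynomial attached to the knots of $b_k$ and $\tau$ is any evaluation point in the support. Choosing $\tau$ inside the inner grid cell $Q$, I would then estimate each term by a Markov-type inverse inequality on $Q$: for $p \in \mathcal{P}_n$ one has $\|D^{r} p\|_{\infty,Q} \leq C(n)\,|Q|^{-r}\,\|p\|_{\infty,Q}$. The coefficients $D^{n-r}\psi_k(\tau)$ scale like $|\mathrm{supp}\,B_k|^{\,r}$, so each term is controlled by $(|\mathrm{supp}\,B_k|/|Q|)^{r}\,\|p\|_{\infty,Q}$, and the hypothesis $|Q| \geq \alpha\,|\mathrm{supp}\,B_k|$ bounds this ratio by $\alpha^{-r}$. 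Summing the $n+1$ terms collapses the dependence to $\mathrm{const}(n,\alpha) \asymp \sum_{r=0}^{n} \alpha^{-r}$, which is the claimed estimate. An equivalent, more abstract route is to note that $\|\cdot\|_{\infty,Q}$ is a genuine norm on the finite-dimensional space $\mathcal{P}_n$ while $\lambda_k$ is a linear functional; norm equivalence then yields a bound, and an affine rescaling together with a compactness argument over the admissible positions of $Q$ (subject to the shape constraint $\alpha$) upgrades the constant to one depending only on $n$ and $\alpha$.

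The main obstacle I anticipate is the scaling bookkeeping in the second part: the de Boor--Fix functional, the derivative factors, and the Markov constants all carry powers of the mesh width $h$ (equivalently of $|\mathrm{supp}\,B_k|$), and the crux is to verify that these cancel so that the final constant is genuinely $h$-independent and enters only through $n$ and the ratio parameter $\alpha$. By contrast, the bi-orthogonality is routine. Care is also needed to state precisely which indices $k,k'$ are admitted: the clean identity holds for inner indices, where $B_{k'}$ is defined, and the vanishing of the outer-index terms under $\lambda_k$ is exactly what makes the extended basis inherit bi-orthogonality from the plain one.
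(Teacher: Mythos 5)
Your argument is sound, but note that the paper itself offers no proof of this theorem: it is imported verbatim from the weighted/extended B-spline literature (H\"ollig's monograph and the references cited just above the statement), so there is nothing in the text to compare against. What you wrote is precisely the standard proof from that source. The bi-orthogonality step is exactly right: the extension sum in $B_{k'}$ runs only over outer indices $j\in J(k')$, while $\lambda_k$ is taken for an inner index $k$, so every correction term is killed by $\lambda_k b_j=\delta_{k,j}=0$ and the identity is inherited from the plain B-spline duality. Your observation that the statement's ``$k,k'\in\mathbb{Z}$'' should really be read as inner indices is a fair criticism of the theorem as printed. For the bound, the de Boor--Fix representation plus Markov's inequality on $Q$ is the right mechanism, and your scaling bookkeeping is correct: $D^{r}\psi_k(\tau)$ contributes $|{\rm supp}\,b_k|^{\,n-1-r}$, Markov contributes $|Q|^{-(n-1-r)}$, and the hypothesis $|Q|\ge\alpha\,|{\rm supp}\,B_k|\ge\alpha\,|{\rm supp}\,b_k|$ caps each ratio by $\alpha^{-(n-1-r)}$, leaving a constant depending only on $n$ and $\alpha$. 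The one point to make explicit is the choice of the evaluation point: $\tau$ must lie in the inner grid cell $Q$, and $Q$ must sit inside ${\rm supp}\,b_k$ (not merely ${\rm supp}\,B_k$) for the dual representation of $\lambda_k$ to apply; this is consistent with Definition 2.1, where $x_i$ is the center of an inner cell of ${\rm supp}\,b_i$, but the theorem's phrasing ``in the support of $B_k$'' glosses over it. Your alternative compactness/norm-equivalence route also works and is arguably cleaner, at the cost of a non-explicit constant.
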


Generalizing the univariate definitions and results of the  to $n\geq 2$ variables is
straightforward. The arguments are completely analogous. Merely the notation needs to be adapted to
the multivariate setting.We consider a tensor product grid in $\mathbb{R}^n$ with knot sequences $ t=[t^1,\ldots,t^n]$
\begin{equation}
t^{\nu}: \ldots \leq t^{\nu}_k \leq t^{\nu}_{k+1} \leq \ldots , ~~ \nu = 1,\ldots,n  \nonumber
\end{equation}

and denote by,
\begin{equation}
b_k= b^{m_1}_{k^1,t_1}(x_1) \ldots b^{m_n}_{k^n,t_n}(x_n) ~,~ k \in \mathbb{Z}^n \nonumber
\end{equation}
the corresponding tensor product B-splines of degree $ m =[m_1,\ldots,m_n]$

\begin{mydef}
\label{def1}
Let $w$ be a positive weight function which is smooth on $\Omega$ and equivalent to some power $ r\geq 0$
of the boundary distance function
\begin{equation}
w(x) \asymp dist(x,\partial \Omega) ^r \nonumber
\end{equation}
and denote by $ x_i$ the center of an inner grid cell in supp $b_i$.Then, the weighted extended B-splines (web splines) are defined by
\begin{equation}
B_i =  \frac{w}{w(x_i)}\biggl(b_i + \sum\limits_{ j \in J(i)} e_{i,j} b_j \biggr) ~~ i \in I(j) \nonumber
\end{equation}
\end{mydef}
\begin{theorem}[Jackson's Inequality]
\label{thmJ}
Let $u\in W^{1,p}(\Omega)$ . Then
\begin{equation*}
||u-\mathcal{P}_h u||_1\le \mathcal{O}(h).
\end{equation*}
\end{theorem}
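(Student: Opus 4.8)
The plan is to realize $\mathcal{P}_h$ as the web-spline quasi-interpolant generated by the de Boor--Fix functionals, $\mathcal{P}_h u=\sum_{i\in\mathbb{I}}(\lambda_i u)\,B_i$, and then to reduce the global bound to a family of purely local estimates, one per inner grid cell. The first step is to record that $\mathcal{P}_h$ is a projection that reproduces polynomials of degree $\le n$ locally: this follows from the bi-orthogonality $\lambda_k B_{k'}=\delta_{k,k'}$ of Theorem~\ref{thm2} together with the polynomial-reproduction property stated earlier (the spline $\sum q(k)b_k$ is a polynomial on $D$ iff $q$ is), adapted through Definition~\ref{def1} to the extended, weighted basis. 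The weight prefactor $w/w(x_i)$ is smooth and, on a cell of diameter $\preceq h$, differs from a constant by $\mathcal{O}(h)$, so it perturbs local reproduction only at order $h$, which is harmless for a first-order estimate.

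Next I would prove local stability of $\mathcal{P}_h$. Fixing an inner grid cell $Q$ in the support of $B_k$ with $|Q|\ge\alpha\,|\,\mathrm{supp}\,B_k|$, the bound $|\lambda_k p|\le\mathrm{const}(n,\alpha)\,\|p\|_{\infty,Q}$ from Theorem~\ref{thm2}, combined with the fact that only $\mathcal{O}(1)$ basis functions are nonzero on any cell, yields $\|\mathcal{P}_h v\|_{L^p(Q)}\preceq\|v\|_{L^\infty(\omega_Q)}$, where $\omega_Q$ is the union of the supports of the finitely many $B_i$ that do not vanish on $Q$. A Bernstein-type inverse inequality for splines then upgrades this to control of the first-order seminorm at the cost of a factor $h^{-1}$.

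The core estimate is then obtained cell by cell. On each $Q$ I insert a local polynomial $\pi_Q\in\mathcal{P}_n$ furnished by the Bramble--Hilbert lemma, for which $\|u-\pi_Q\|_{L^p(\omega_Q)}\preceq h\,|u|_{W^{1,p}(\omega_Q)}$. Using local reproduction, $u-\mathcal{P}_h u=(u-\pi_Q)-\mathcal{P}_h(u-\pi_Q)$ on $Q$; applying the stability and inverse bounds to the second term and Bramble--Hilbert to the first gives the per-cell estimate $\|u-\mathcal{P}_h u\|_{1,Q}\preceq h\,|u|_{W^{1,p}(\omega_Q)}$. Raising to the $p$-th power, summing over all cells, and invoking the finite-overlap property of the patches $\{\omega_Q\}$ collapses the sum back to $|u|_{W^{1,p}(\Omega)}$ and delivers $\|u-\mathcal{P}_h u\|_1\le\mathcal{O}(h)$.

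The main obstacle is keeping all constants uniform in $h$ on the non-uniform grid while simultaneously handling the outer-spline extensions and the weight. Near $\partial\Omega$ the extension coefficients $e_{i,j}=\lambda_j p_{i,j}$ can be large and the ratio $\alpha$ in $|Q|\ge\alpha\,|\,\mathrm{supp}\,B_k|$ threatens to degenerate, so the decisive technical point is to verify, for the closest-inner-cell assignment $Q_j$ of the extended-B-spline definition, that the constants $\mathrm{const}(n,\alpha)$ of Theorem~\ref{thm2} and the overlap counts of the $\omega_Q$ stay bounded independently of $h$. Once that uniformity is secured, the Bramble--Hilbert-plus-summation argument is routine.
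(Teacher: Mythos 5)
The paper itself offers no proof of Theorem~\ref{thmJ}: it is stated as a known Jackson-type estimate imported from the web-spline approximation theory of H\"ollig et al.\ \cite{hollig2001weighted,hollig2003nonuniform,hollig2003finite}, and your overall architecture --- realize $\mathcal{P}_h$ as the de Boor--Fix quasi-interpolant, establish local polynomial reproduction via the bi-orthogonality of Theorem~\ref{thm2}, prove local stability, insert a Bramble--Hilbert polynomial cell by cell, and sum using finite overlap --- is exactly the standard route taken in those references. So the strategy is the right one.

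There is, however, a concrete gap in your derivative counting, which in fairness mirrors a defect in the statement itself. Your per-cell chain bounds the second term by applying an inverse inequality costing $h^{-1}$ to $\|\mathcal{P}_h(u-\pi_Q)\|_{L^p(Q)}\preceq\|u-\pi_Q\|_{\infty,\omega_Q}\preceq h\,|u|_{W^{1,p}(\omega_Q)}$; the $h^{-1}$ and the $h$ cancel, so what you actually obtain for the gradient part is $\|\nabla\mathcal{P}_h(u-\pi_Q)\|_{L^p(Q)}\preceq|u|_{W^{1,p}(\omega_Q)}$, i.e.\ $\mathcal{O}(1)$ stability, not an $\mathcal{O}(h)$ rate. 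Your claimed conclusion $\|u-\mathcal{P}_h u\|_{1,Q}\preceq h\,|u|_{W^{1,p}(\omega_Q)}$ is dimensionally impossible: a first-order norm on the left cannot gain a power of $h$ against only first derivatives on the right. To get $\mathcal{O}(h)$ in the first-order norm one needs $u\in W^{2,p}(\Omega)$ (which is in fact how the paper uses the theorem in the proof of Theorem~\ref{thm}, where $u\in W^{2,p}$ is assumed); then take $\pi_Q$ of degree at least one, use $\inf_{\pi}\|u-\pi\|_{W^{1,p}(\omega_Q)}\preceq h\,|u|_{W^{2,p}(\omega_Q)}$, bound $\|\mathcal{P}_h(u-\pi_Q)\|_{W^{1,p}(Q)}$ by $W^{1,p}$-stability of $\mathcal{P}_h$ directly (no inverse inequality needed), and sum. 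Alternatively, if only $u\in W^{1,p}$ is assumed, the correct conclusion is $\|u-\mathcal{P}_h u\|_{L^p}=\mathcal{O}(h)$. A secondary issue: the $L^\infty$-based stability bound you invoke is not available for general $W^{1,p}$ functions (for $p<2$ in two dimensions they need not be bounded), and the de Boor--Fix functionals involve derivative point evaluations; the standard repair is to define the local functionals through cell averages or through the polynomial $\pi_Q$ itself so that stability holds in $L^p$.
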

\subsection{Remark}
\begin{itemize}
\item[$\bullet$]Theorem \ref{thm2} remain valid for this new class of Splines.
\item[$\bullet$] The linear span of web-splines is the web-space $\mathcal{B}$
\item[$\bullet$]The canonical projector $\mathrm{P}_h$ onto the spline space $\mathcal{B}$ is defined as
\begin{equation}
\mathrm{P}_hf = \sum\limits_{i \in \mathbb{I}} (\Lambda_i f) B_i
\label{eq23}
\end{equation}
where,the weight functional  $\Lambda_i f = w(x_i)\lambda_i (f/w) ~,~~ i \in \mathbb{I}$, with $x_i$ as in definition \ref{def1}
\item [$\bullet$] It satisfies,
\begin{align}
\label{2}
\int_{\Omega} \nabla \cdot \left(\mathrm{P}_h u - u_h \right) v_h \,dx=0
\end{align}

\end{itemize}

\section{ Elliptic partial differential equation analysis in NUWEBSFEA framework}
As NUWEBSFEA of variable coefficient of Poisson equation (VCPEA) is not available in literature, for simplicity we begin with NUWEBSFEA of VCPEA

The computational domain is denoted by $\Omega$ and and the VCPE model is given by, 

\begin{align}
\label{eq0}
-\nabla . a(x) \nabla u(x) & = f(x) ~~\text{on}~\Omega\\
u = 0 ~~\text{on} ~\partial \Omega
\end{align}
where $u$ is the scalar potential and $f$ is the source term. In case of EEG imaging this equation can often be used under the quasi static approximation of the Maxwell's equation. Moreover in EEG the source term is like the form $f(x) = \nabla \cdot d(x)$  where , $ d : \Omega \rightarrow \mathbb{R}^m ~~, ~m = 2,3$ is a vector field that describes the neural sources as idealized electric dipoles.
\\
As usual the solution is approximated by a linear combination
\begin{align*}
u_h = \sum_i a_i B_i
\end{align*}
which vanish outside a set of diameter proportional to grid width $h$.The coefficients $a_i$ 's are determined from Galerkin system
\begin{align}
\label{eq-1}
\int a(x) \nabla B_j \nabla  u_h \,dx = \sum_i \left( \int a(x) \nabla B_j \nabla  B_i \,dx  \right) a_i = \int B_j f   
\end{align}
\begin{theorem}
Let, $ u \in H^n$ be the solution of the problem (\ref{eq0}) and $u_h =\sum_ia_i B_i$ a finite element approximation obtained by solving the Galerkin system (\ref{eq-1}) If there exist a $ \kappa >0$ such that  $a(x) \ge \kappa $ then 
\begin{align*}
||u -u_h|| _1 \preceq h^{n-1} ||u||_1
\end{align*}
\end{theorem}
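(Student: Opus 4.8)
The plan is to proceed by the classical Lax--Milgram and C\'ea route, treating (\ref{eq0}) as a symmetric coercive variational problem and then converting the quasi-optimality of its Galerkin approximation into a convergence rate through the approximation power of the web-spline space. First I would recast the problem weakly: set $a(u,v)=\int_\Omega a(x)\,\nabla u\cdot\nabla v\,dx$ and $F(v)=\int_\Omega f v\,dx$, so that (\ref{eq0}) reads ``find $u\in H^1_0(\Omega)$ with $a(u,v)=F(v)$ for all $v\in H^1_0(\Omega)$'', while the Galerkin system (\ref{eq-1}) is the same statement restricted to $v_h\in\mathcal{B}_h$. The next step is to record coercivity and continuity of $a(\cdot,\cdot)$: from $a(x)\ge\kappa$ one gets $a(v,v)\ge\kappa\|\nabla v\|_0^2$, which together with the Poincar\'e inequality on the bounded domain $\Omega$ yields $a(v,v)\succeq\|v\|_1^2$; the matching continuity $a(u,v)\preceq\|u\|_1\|v\|_1$ requires $a\in L^\infty(\Omega)$, an upper bound that the statement leaves implicit and that I would add as a standing hypothesis. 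Lax--Milgram then delivers existence and uniqueness of both $u$ and $u_h$.

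Second, because $\mathcal{B}_h\subset H^1_0(\Omega)$ the continuous and discrete problems agree on the discrete space, so subtracting gives the Galerkin orthogonality $a(u-u_h,v_h)=0$ for every $v_h\in\mathcal{B}_h$ --- essentially the identity (\ref{2}) of the remark. Inserting this into coercivity and continuity produces C\'ea's estimate in the usual manner: for any $v_h\in\mathcal{B}_h$,
\begin{equation*}
\|u-u_h\|_1^2\preceq a(u-u_h,u-u_h)=a(u-u_h,u-v_h)\preceq\|u-u_h\|_1\,\|u-v_h\|_1,
\end{equation*}
whence $\|u-u_h\|_1\preceq\inf_{v_h\in\mathcal{B}_h}\|u-v_h\|_1$, the hidden constant being the ellipticity ratio $\|a\|_\infty/\kappa$, which is independent of $h$.

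The final and genuinely substantive step is to bound the best-approximation error by choosing $v_h=\mathrm{P}_h u$, the canonical web-spline projector of (\ref{eq23}), and invoking the approximation power of the web-spline space. Here the first-order Jackson inequality (Theorem~\ref{thmJ}) only yields $\mathcal{O}(h)$; to reach $h^{n-1}$ I would use its full-order analogue $\|u-\mathrm{P}_h u\|_1\preceq h^{n-1}\|u\|_n$ for $u\in H^n(\Omega)$. This estimate rests on two facts already available in the excerpt: the polynomial-reproduction property (that $\sum_k q(k)b_k$ is a polynomial of order $n$ on $D$ exactly when $q$ is), which makes $\mathrm{P}_h$ reproduce polynomials of degree $<n$ locally, and the bi-orthogonality together with the stability bound $|\lambda_k p|\le \mathrm{const}(n,\alpha)\|p\|_{\infty,Q}$ of Theorem~\ref{thm2}, which makes $\mathrm{P}_h$ locally bounded uniformly in $h$. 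A standard Bramble--Hilbert / local Taylor argument on each grid cell then upgrades local reproduction plus projector stability to the claimed rate, and combining with C\'ea's estimate gives $\|u-u_h\|_1\preceq h^{n-1}\|u\|_n$ (with the full $H^n$ norm on the right, rather than the $\|u\|_1$ printed in the statement). The main obstacle is precisely this approximation lemma: since the stated Jackson inequality is only first order, the rate $h^{n-1}$ must be justified through the higher-order approximation theory of weighted extended B-splines, and the delicate point is verifying that neither the weight factor $w/w(x_i)$ nor the extension coefficients $e_{i,j}$ degrade the local order --- this is where the smoothness and the boundary-distance equivalence of $w$ in Definition~\ref{def1} are essential.
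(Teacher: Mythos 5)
Your proposal follows essentially the same route as the paper: the paper's own proof is only a brief sketch that invokes coercivity from $a(x)\ge\kappa$, C\'ea's lemma, and the standard spline approximation estimates, which is precisely the Lax--Milgram/C\'ea/Jackson chain you lay out in detail. Your version is in fact more careful than the paper's, since you correctly flag that continuity needs $a\in L^\infty(\Omega)$, that the rate $h^{n-1}$ requires a higher-order analogue of the first-order Jackson inequality of Theorem~\ref{thmJ}, and that the right-hand side should carry $\|u\|_n$ rather than $\|u\|_1$ --- points the paper's proof does not address.
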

\begin{proof}
The proof relies on results and techniques from \cite{hollig2003nonuniform,de2013box} and the theory of weighted approximations.Moreover, the standard error estimates for splines are crucial for our arguments. We begin by noting that,
\begin{align*}
||u|| \preceq ||a (x ) u ||~~,~~ ||w u || \preceq || a (x) w u ||
\end{align*}
We refer also to \cite{de1988cardinal,zienkiewicz2000finite} where a weaker version of theorem was obtained, for some  of the preleminary arguments. Finally by using Cea's Lemma , the error of $u_h$ can be bounded , up to a constant factor, by the error of the best approximation from the finite element subspace. 
\end{proof}

\section{p-Laplace Problem}
\label{secP}
We consider the $p$- Laplacian system : Given $p\in (0, \infty)~,~f\in L^2(\Omega)~,~g\in W^{1-1/p,p}(\Omega)$ and $\inf \{ a(x), b(x)\} >0$ find $u$ such that,
\begin{equation}
\label{eq1}
-\nabla \cdot (| a(x)\nabla u|^{p-2} \nabla u) + b(x) u = f ~~~\Omega \subset \mathbb{R}^2~~u=g ~~\text{on}~ \partial \Omega
\end{equation}
For the convenience sake we assume $a(x)= b(x) = 1$. The weak formulation is given by:\\
\vspace{3mm}
Find $u \in W_g^{1,p}(\Omega) \equiv \{ v \in W^{1,p}(\Omega) ~:~ v=g~on~\partial \Omega\}$  such that,
\begin{equation}
\label{eq2}
\mathcal{L}(u,v)=\int_{\Omega} |\nabla u|^{p-2} \nabla u \cdot \nabla v \,d x + \int_{\Omega} u v \, dx= \int_{\Omega} fv \,d x ~~~\forall ~v \in W_0^{1,p}(\Omega),
\end{equation}
where, $|v|^2=\langle v,v \rangle _{\mathbb{R}^2}$.\\

Define a strictly convex functional, $J: W^{1,p}(\Omega) \longrightarrow \mathbb{R}$
\begin{align*}
J(v):= \frac{1}{p} \int_{\Omega} |\nabla v |^p \,dx +\frac{1}{2} \int_{\Omega} v^2 \,dx - \int_{\Omega} fv \,dx 
\end{align*}
Assuming, $J': W^{1,p}(\Omega) \longrightarrow \left(W^{1,p}(\Omega) \right)'$  we have,
\begin{align*}
\langle J'(u) , v \rangle = a(u,v) - \langle f , v \rangle
\end{align*} 
Define quasi-norm for $ u \in W^{1,p}(\Omega)$
\begin{align*}
|u|^2_{(u_s,p,2)}= \int_{\Omega} \left(|\nabla u_s|+ |\nabla u| \right)^{p-2} | \nabla u |^2 \,dx
\end{align*}
where $u_s$ is the solution of the problem.

\begin{theorem}
\label{thmJ1}
We have for $p \in (1,2]$
\begin{align*}
|u|^2_{(u_s,p,2)} \le |u|^p_{W^{1,p}(\mathcal{B}_h)} \le C [|u_s|_{W^{1,p}(\mathcal{B}_h)}+ |u|_{W^{1,p}(\mathcal{B}_h)}]^{2-p} |u|^{1/2}_{(u_s,p,2)}
\end{align*}
\proof see \cite{barrett1993finite}
\qed

\end{theorem}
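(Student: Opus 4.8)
The plan is to prove the two bounds independently, in both cases reducing to a pointwise estimate on the integrands and exploiting that $p-2\le 0$ throughout the range $p\in(1,2]$. The endpoint $p=2$ is immediate, since then the weight $(|\nabla u_s|+|\nabla u|)^{p-2}\equiv 1$ and all three quantities collapse to $\int_\Omega|\nabla u|^2$; the substantive case is $p\in(1,2)$.

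For the left inequality I would argue pointwise. Because $|\nabla u_s|+|\nabla u|\ge|\nabla u|\ge 0$ and the exponent $p-2$ is non-positive, raising to that power reverses the order, so $(|\nabla u_s|+|\nabla u|)^{p-2}\le|\nabla u|^{p-2}$ wherever $\nabla u\ne 0$. Multiplying by $|\nabla u|^2$ gives $(|\nabla u_s|+|\nabla u|)^{p-2}|\nabla u|^2\le|\nabla u|^p$ almost everywhere (the set $\{\nabla u=0\}$ contributes nothing to either side), and integrating over $\Omega$ is exactly $|u|^2_{(u_s,p,2)}\le|u|^p_{W^{1,p}(\mathcal{B}_h)}$.

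For the right inequality the key tool is H\"older's inequality with the conjugate pair $2/p$ and $2/(2-p)$. I would split the integrand of $\int_\Omega|\nabla u|^p$ as
\[
|\nabla u|^p=\big[(|\nabla u_s|+|\nabla u|)^{p-2}|\nabla u|^2\big]^{p/2}\,(|\nabla u_s|+|\nabla u|)^{(2-p)p/2},
\]
which is an identity since the two weight exponents sum to zero. Applying H\"older separates these factors into $|u|^{p}_{(u_s,p,2)}$ and $\big(\int_\Omega(|\nabla u_s|+|\nabla u|)^p\big)^{(2-p)/2}$. The second factor is then controlled by the convexity bound $(|\nabla u_s|+|\nabla u|)^p\preceq|\nabla u_s|^p+|\nabla u|^p$ together with the elementary inequality $(a^p+b^p)^{1/p}\le a+b$, which delivers a constant multiple of $\big(|u_s|_{W^{1,p}(\mathcal{B}_h)}+|u|_{W^{1,p}(\mathcal{B}_h)}\big)^{p(2-p)/2}$. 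Collecting the factors yields the asserted upper bound with $C=C(p)$, modulo reconciling the exponents on the bracket and on the quasi-norm with the normalisation used in the statement.

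The routine part is the exponent arithmetic; the step needing the most care is the H\"older factorisation, because for $p<2$ the weight $(|\nabla u_s|+|\nabla u|)^{p-2}$ is singular on the set where both gradients vanish. I would therefore carry out the factorisation only on $\{|\nabla u_s|+|\nabla u|>0\}$, verify that the complementary null set contributes nothing, and use $u,u_s\in W^{1,p}(\Omega)$ to guarantee that each H\"older factor is integrable. This is precisely the quasi-norm technology of Barrett and Liu to which the statement defers via \cite{barrett1993finite}.
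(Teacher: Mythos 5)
The paper gives no proof of this theorem at all --- it simply defers to Barrett and Liu \cite{barrett1993finite} --- so there is nothing internal to compare against; what you have written is, in substance, the standard quasi-norm argument from that reference. Both halves of your argument are sound: the pointwise monotonicity of $t\mapsto t^{p-2}$ for $p-2\le 0$ gives the lower bound, and the H\"older factorisation with conjugate exponents $2/p$ and $2/(2-p)$, followed by $(a+b)^p\preceq a^p+b^p$ and Minkowski, gives the upper bound. Your handling of the degenerate set $\{|\nabla u_s|+|\nabla u|=0\}$ is also the right precaution, although on that set the quasi-norm integrand is simply $0$ under the usual convention, so nothing is lost.

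The one point you should not leave as a throwaway remark is the ``reconciling the exponents'' caveat. Your computation yields
\begin{equation*}
|u|^p_{W^{1,p}(\mathcal{B}_h)} \;\le\; \bigl(|u_s|_{W^{1,p}(\mathcal{B}_h)}+|u|_{W^{1,p}(\mathcal{B}_h)}\bigr)^{p(2-p)/2}\,|u|^{p}_{(u_s,p,2)},
\end{equation*}
and this is the inequality that is actually true (and is what Barrett--Liu prove). It is \emph{not} a renormalisation of the inequality as printed in the theorem: replacing $u,u_s$ by $\lambda u,\lambda u_s$ scales the left side like $\lambda^{p}$ and the right side of the printed bound like $\lambda^{2-p+p/4}$, which agree only at $p=8/7$, so the statement with exponents $2-p$ and $1/2$ fails a homogeneity check and cannot hold with a constant independent of $u$. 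In other words, your proof is correct for the corrected statement, and the discrepancy is a typo in the theorem rather than a gap in your argument; but you should say so explicitly rather than suggest the exponents can be reconciled, because they cannot.
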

  \subsection{Discretization using  non uniform WEB-splines basis}
 Let, $\mathfrak{T}_h$ be a quadrangulation of the $\Omega$. Let,
 \begin{align*}
 \mathcal{V}_{h}^{(n)} := \left \{ v_h \in \mathcal{C}^0(\overline{\Omega}) ~|~ v_h \mid_{\partial \Omega} =0 ~\wedge~ v_h\mid_{\tau} \in Q_n(\tau) ~\forall \tau \in \mathfrak{T}_h  \right\}
 \end{align*}
 where $Q_n(\tau)$ is the WEB-Spline space of degree $n$ defined on each cell $\tau$ . The approximation is then to seek $u_h \in \mathcal{V}_h^{(n)}$ such that 
 \begin{align}
 \label{eq7}
 \mathcal{L}(u_h,v_h)= (f, v_h) ~~\forall v_h \in \mathcal{V}_h^{(n)} 
 \end{align}
 We write ,
 \begin{align}
 \label{eq8}
 u_h =\sum_{i=1}^N c_iB_i~~c_i \in \mathbb{R}
 \end{align}
 The coefficients $c_i$  can be obtained from the following system after linearization.Consequently from  the equations (\ref{eq2}) , (\ref{eq7}) and (\ref{eq8}) we have,
 \begin{align*}
 \sum_{i=1}^N \left[ \sum_{\delta \in \mathfrak{T}_h} \int_{\delta} | \nabla u_h|^{p-2} \left(\frac{\partial B_i}{\partial x} \frac{\partial B_j}{\partial y} +\frac{\partial B_i}{\partial y} \frac{\partial B_j}{\partial x} \right) + \int_{\Omega} B_i B_j \,dx \right] = \int_{\Omega} f B_j ~~j= 1,2,\ldots, N 
 \end{align*}
\subsection{Error bounds}
\label{sec3}
The finite element approximation of \ref{eq2} that we wish to consider is: Find $u_h  \in \mathcal{B}^g_h$ and $v_h  \in \mathcal{B}^0_{h}$ such that
\begin{equation*}
\mathcal{L}(u_h,v_h)=\int_{\Omega} |\nabla u_h|^{p-2} \nabla u_h \cdot \nabla v_h \,dx + \int_{\Omega} u_h v_h \,dx= \int_{\Omega} fv_h \,dx
\end{equation*}

 where,
 \begin{align*}
 \mathcal{B}^g_h :&= \{u_h \in \mathcal{B}_h ~:~u_h=g ~\text{on} \partial \Omega_h \}\\
 \mathcal{B}^0_h :&= \{u_h \in \mathcal{B}_h ~:~u_h=0 ~\text{on} \partial \Omega_h \}
 \end{align*}
 The following error bounds
\begin{equation*}
|| \nabla (u - u_h)||_{1,p} \le \left\{
	\begin{array}{ll}
		C h^{1/(3-p)}  & \mbox{if } p \leq 2 \\
		C h^{1/(p-1)} & \mbox{if } p \ge 2
	\end{array}
\right.
\end{equation*} 
 
 \begin{equation*}
\mathcal{L}(u,w)- \mathcal{L}(v,w) \le 
 \left\{
	\begin{array}{ll}
		\nu || \nabla ( u -v) || _{0,p}^{p-1} || \nabla w||_{0,p} & \mbox{if } 1< p \leq 2 \\
		\nu \left[ C_1+C_2( || \nabla u || _{0,p}+|| \nabla v ||_{0,p})^{2-p} \right] || \nabla ( u - v)||_{0,p} || \nabla w ||_{0,p} & \mbox{if }  2\le p < \infty
	\end{array}
\right.
\end{equation*} 
$C$ depends on the domain $\Omega$ and the degree of the polynomials
  were proved in Glowinski and Marrocco\cite{glowinski1975approximation}  for the case $\Omega_h=\Omega~and~ g =0$. In this paper we are improving the error bound by employing an approach of Chow \cite{chow1989finite} and Tyukhtin \cite{tyukhtin1984rate}  in the framework of NUWEBS.\\
  We now state an important theorem which is relevant to provide a sharper error estimates.
  \begin{theorem}
  If $u$ and $u_h$ be the weak and approximate solutions then for some $C>0$ we have
  \begin{align*}
  |u -u_h|_{(u_s,p)} \le C \inf_{v_h \in \mathcal{B}_h} |u-v_h|_{(u_s,p)}
  \end{align*}
\end{theorem}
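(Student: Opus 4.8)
The plan is to derive this quasi-norm analogue of C\'ea's lemma from three ingredients: Galerkin orthogonality, a strong monotonicity (coercivity) bound from below, and a Lipschitz continuity bound from above, both measured in the quasi-norm $|\cdot|_{(u_s,p,2)}$ of the Definition, with $u_s=u$ the exact weak solution. First I would record the Galerkin orthogonality: subtracting the discrete problem (\ref{eq7}) from the weak formulation (\ref{eq2}) gives $\mathcal{L}(u,v_h)-\mathcal{L}(u_h,v_h)=0$ for every $v_h\in\mathcal{B}^0_h$. Since $u_h$ and any competitor $v_h\in\mathcal{B}_h$ share the same boundary data, the difference $u_h-v_h\in\mathcal{B}^0_h$ is an admissible discrete test function, so
\begin{align*}
\mathcal{L}(u,u_h-v_h)-\mathcal{L}(u_h,u_h-v_h)=0 \qquad \forall\, v_h\in\mathcal{B}_h.
\end{align*}

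The second ingredient is the lower bound. Writing the monotone vector field $\mathbf{A}(\xi)=|\xi|^{p-2}\xi$ and invoking the classical pointwise inequality valid for $1<p\le 2$,
\begin{align*}
\bigl(\mathbf{A}(\xi)-\mathbf{A}(\eta)\bigr)\cdot(\xi-\eta)\succeq \bigl(|\xi|+|\eta|\bigr)^{p-2}|\xi-\eta|^2,
\end{align*}
applied with $\xi=\nabla u$ and $\eta=\nabla u_h$, together with the nonnegativity of the zero-order contribution $\int_{\Omega}(u-u_h)^2\,dx\ge 0$, I obtain the coercivity estimate
\begin{align*}
\mathcal{L}(u,u-u_h)-\mathcal{L}(u_h,u-u_h)\succeq |u-u_h|^2_{(u_s,p,2)}.
\end{align*}

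The third ingredient is the matching upper bound. The companion inequality $|\mathbf{A}(\xi)-\mathbf{A}(\eta)|\preceq(|\xi|+|\eta|)^{p-2}|\xi-\eta|$, followed by a Cauchy--Schwarz estimate with respect to the weight $(|\nabla u_s|+|\nabla(u-u_h)|)^{p-2}$, yields the quasi-norm continuity $\mathcal{L}(u,w)-\mathcal{L}(u_h,w)\preceq |u-u_h|_{(u_s,p,2)}\,|w|_{(u_s,p,2)}$ for all admissible $w$. Choosing $w=u-v_h$ and using the linearity of $\mathcal{L}$ in its second argument together with Galerkin orthogonality to replace $u-u_h$ by $u-v_h$, I chain the two bounds:
\begin{align*}
|u-u_h|^2_{(u_s,p,2)} &\preceq \mathcal{L}(u,u-u_h)-\mathcal{L}(u_h,u-u_h)\\
&=\mathcal{L}(u,u-v_h)-\mathcal{L}(u_h,u-v_h)\\
&\preceq |u-u_h|_{(u_s,p,2)}\,|u-v_h|_{(u_s,p,2)}.
\end{align*}
Dividing by $|u-u_h|_{(u_s,p,2)}$ and taking the infimum over $v_h\in\mathcal{B}_h$ then produces the claimed estimate.

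The main obstacle will be the consistent choice of reference point in the quasi-norm. The monotonicity and continuity inequalities naturally produce weights built from $|\nabla u|+|\nabla u_h|$, whereas the target quasi-norm carries the fixed weight generated by $|\nabla u_s|$; reconciling the two requires the quasi-norm equivalence of Theorem \ref{thmJ1}. The crux is to verify that the equivalence constants, and hence the final C\'ea constant $C$, depend only on $p$ and $\Omega$ and are \emph{independent} of the mesh parameter $h$, since otherwise the estimate would degenerate as $h\to 0$. For the regime $p\ge 2$ the same three-step scheme applies, but one substitutes the complementary pair of algebraic inequalities for $\mathbf{A}$ and the corresponding continuity constant from the second displayed bound in Section \ref{sec3}, with the roles of the exponents interchanged.
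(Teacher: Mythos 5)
The paper does not actually prove this theorem: it disposes of it with ``see \cite{thomas}'', so your proposal necessarily takes a different route --- it supplies the argument the paper omits. Your three-step scheme (Galerkin orthogonality, strong monotonicity of $\xi\mapsto|\xi|^{p-2}\xi$ from below, Lipschitz-type continuity from above, both measured in the quasi-norm) is precisely the Barrett--Liu mechanism of \cite{barrett1993finite}, which is the natural source here and arguably a more apposite citation than the one the paper gives. Two caveats on your execution. First, the weight-mismatch problem you flag at the end is the real content of the proof, and Theorem \ref{thmJ1} is not the right tool to resolve it: that theorem relates the quasi-norm to $W^{1,p}$ quantities with \emph{different} exponents on the two sides, so routing the continuity bound through it costs a power and would not return a clean C\'{e}a constant independent of $h$. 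What is actually needed is the Young-type inequality for quasi-norms from Barrett--Liu: for $1<p\le 2$ and any $\epsilon>0$, one has $(|a|+|b|)^{p-2}|b|\,|c|\le \epsilon(|a|+|b|)^{p-2}|b|^2+C_\epsilon(|a|+|c|)^{p-2}|c|^2$, which lets you trade the weight generated by $|\nabla(u-u_h)|$ for the one generated by $|\nabla(u-v_h)|$ and absorb the $\epsilon$-term into the left-hand side. Second, your continuity step silently drops the zero-order contribution $\int_\Omega(u-u_h)(u-v_h)\,dx$ of $\mathcal{L}$, which is not controlled by the gradient-only quasi-norm without an extra Sobolev/Poincar\'{e} argument; it is harmless in two dimensions but must be addressed. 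With those two repairs your argument is complete and, unlike the paper's, self-contained.
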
 
\proof see \cite{thomas}
\qed  
  \begin{theorem} \label{thm}Let, $u$  be  the  weak solution and $u_h$ be the NU-WEBS based solution of the equation. For, $1 < p< 2$
  \begin{equation*}
|u-u_h|_{(u_s,p)} \le
\left\{
	\begin{array}{ll}
		\mathcal{O}(h^{p/2})  & \mbox{whenever } u \in W^{2,p}(\Omega)  \\
		\mathcal{O}(h) & \mbox{whenever } u \in \mathcal{C}^{2,2/p-1}(\bar{\Omega}) \cap W^{3,1}(\Omega)
	\end{array}
\right.
\end{equation*} 
Again for $2 < p < \infty $
\begin{align*}
|u-u_h|_{(u_s,p)} \le \mathcal{O}(h^{\alpha/ 2}) & \mbox { whenever } u \in W^{1,\infty}(\Omega) \cap W^{2,\alpha}(\Omega)~~,~1 \le \alpha \le 2 
\end{align*}
\end{theorem}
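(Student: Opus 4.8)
The plan is to reduce the error to a best-approximation problem in the quasi-norm and then estimate the interpolation error of the canonical web-spline projector, treating the two ranges of $p$ separately. The nonlinearity of the discrete problem is already absorbed into the quasi-norm C\'ea estimate stated immediately above the theorem, which itself rests on the strong monotonicity and continuity of the $p$-Laplacian form (the two dichotomy bounds on $\mathcal{L}(u,w)-\mathcal{L}(v,w)$ recorded in Section~\ref{sec3}); hence I never linearise directly. First I would invoke that C\'ea lemma and choose the trial function $v_h=\mathcal{P}_h u\in\mathcal{B}_h$, giving
\begin{equation*}
|u-u_h|_{(u_s,p)}\le C\inf_{v_h\in\mathcal{B}_h}|u-v_h|_{(u_s,p)}\le C\,|u-\mathcal{P}_h u|_{(u_s,p)},
\end{equation*}
so the whole problem reduces to estimating the quasi-norm interpolation error of the single, explicitly known projector $\mathcal{P}_h$ of \eqref{eq23}.

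For $1<p<2$ the natural device is Theorem~\ref{thmJ1}, whose left-hand inequality, applied to $v=u-\mathcal{P}_h u$, gives $|u-\mathcal{P}_h u|_{(u_s,p)}^{2}\le|u-\mathcal{P}_h u|_{W^{1,p}(\mathcal{B}_h)}^{p}$. Combining this with Jackson's inequality (Theorem~\ref{thmJ}) in the form $|u-\mathcal{P}_h u|_{W^{1,p}}\le C\,h\,|u|_{W^{2,p}}$ immediately yields $|u-\mathcal{P}_h u|_{(u_s,p)}\le C\,h^{p/2}$, which is the first line of the theorem. The sharper $\mathcal{O}(h)$ bound under the stronger regularity $u\in\mathcal{C}^{2,2/p-1}(\bar{\Omega})\cap W^{3,1}(\Omega)$ cannot be obtained from this crude passage to the $W^{1,p}$ norm; here I would follow the refinement of Chow and Tyukhtin and bound the weighted integrand directly, splitting $\Omega$ into the set where $|\nabla u_s|+|\nabla u|$ is comparable to $h$ and its complement. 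On the complement the weight $(|\nabla u_s|+|\nabla u|)^{p-2}$ is bounded and the estimate collapses to an $L^{2}$-gradient bound; on the degenerate set the H\"older exponent $2/p-1$ is precisely what is needed to absorb the singular weight, while the $W^{3,1}$ control furnishes the extra derivative that upgrades the rate from $h^{p/2}$ to $h$.

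For $2<p<\infty$ the weight is no longer singular but may be large, so I would use the $W^{1,\infty}(\Omega)$ hypothesis to bound $(|\nabla u_s|+|\nabla u|)^{p-2}$ above by a constant and write
\begin{equation*}
|u-\mathcal{P}_h u|_{(u_s,p)}^{2}\le C\int_{\Omega}|\nabla(u-\mathcal{P}_h u)|^{2}\,dx .
\end{equation*}
The remaining $L^{2}$-gradient interpolation error is then handled by the web-spline approximation theory for $W^{2,\alpha}$ data with $1\le\alpha\le2$; interpolating between the endpoint estimates at $\alpha=1$ and $\alpha=2$ (equivalently, balancing the single available derivative against the change of integrability) produces $\|\nabla(u-\mathcal{P}_h u)\|_{L^{2}}\le C\,h^{\alpha/2}$, and hence $|u-u_h|_{(u_s,p)}\le\mathcal{O}(h^{\alpha/2})$.

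I expect the genuine obstacle to be the second line for $1<p<2$. Two things have to be reconciled there: the localisation on the degenerate set, where the prescribed H\"older smoothness $\mathcal{C}^{2,2/p-1}$ must be matched exactly against the order of degeneracy of the weight, and the verification that the web-spline projector $\mathcal{P}_h$ of \eqref{eq23}---rather than a nodal conforming interpolant---admits the required local $W^{3,1}$-type approximation estimates on each inner grid cell. Checking that the weighted extension coefficients $e_{i,j}$ and the factor $w/w(x_i)$ of Definition~\ref{def1} do not spoil these local bounds is the step most likely to demand genuine new work, since the Chow--Tyukhtin analysis was written for standard conforming elements and not for the basis $B_i$ constructed here.
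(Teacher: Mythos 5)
Your proposal is correct in substance and, for two of the three cases, coincides with the paper's own argument: the reduction via the quasi-norm C\'ea lemma with $v_h=\mathcal{P}_h u$, the use of the left inequality of Theorem~\ref{thmJ1} followed by Jackson's inequality (Theorem~\ref{thmJ}) to get $\mathcal{O}(h^{p/2})$ for $u\in W^{2,p}$, and the treatment of $2<p<\infty$ are all essentially what the paper does. For $p>2$ the bookkeeping differs slightly but equivalently: the paper lowers the exponent on the error inside the integral, writing $(|\nabla u_s|+|\nabla e|)^{p-2}|\nabla e|^{2}\le(|\nabla u_s|+|\nabla e|)^{p-\alpha}|\nabla e|^{\alpha}$ and then using the $L^{\alpha}$ Jackson estimate, whereas you absorb $|\nabla e|^{2-\alpha}$ into an $L^{\infty}$ bound and pass through $\|\nabla e\|_{L^{2}}$; both hinge on the same $W^{1,\infty}$ stability and give $h^{\alpha}$ for the squared quasi-norm, hence $h^{\alpha/2}$. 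The genuine divergence is the intermediate $\mathcal{O}(h)$ case for $1<p<2$: you propose a Chow-style splitting of $\Omega$ into the degenerate set where $|\nabla u_s|+|\nabla u|\lesssim h$ and its complement, matching the H\"older exponent $2/p-1$ against the singular weight on the degenerate part. The paper instead works pointwise and globally: it bounds $|\nabla(u-\mathcal{P}_h u)|\le Ch\,\mathcal{H}[u]+\mathcal{O}(h^{2/p})$ with $\mathcal{H}[u]$ the sum of second-derivative moduli, then exploits the monotonicity and quasi-subadditivity of $\psi_{\lambda}(x)=(\lambda+x)^{p-2}x^{2}$ to extract the factor $h^{2}$ and reduce everything to the boundedness of $\int_{\Omega}(|\nabla u_s|+Ch\,\mathcal{H}[u])^{p-2}\mathcal{H}[u]^{2}$. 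Both techniques descend from Barrett--Liu/Chow and neither is carried out in full detail (the paper simply asserts the final integral is bounded); your version makes the role of the H\"older exponent more transparent, while the paper's avoids any measure-theoretic decomposition. Your closing concern --- that the local approximation estimates must be verified for the web-spline projector \eqref{eq23} with its extension coefficients and weight factor, rather than for a nodal interpolant --- is well placed and is precisely the point the paper passes over in silence.
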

\begin{proof}

For the case $ 1 < p <2$. We have for $ u \in W^{2,p}(\Omega)$
\begin{align*}
|u-\mathcal{P}_hu|^2_{(u_s,p)} & = \int_{\Omega} \left( |\nabla u_s |+|\nabla ( u - \mathcal{P}_h u|\right) ^{p-2} | \nabla ( u - \mathcal{P}_h u ) |^2 \,dx\\
& \le \int_{\Omega} | \nabla ( u - \mathcal{P}_h u ) |^p \,dx ~~~~\mbox{  from Theorem} (\ref{thmJ1})\\
& \le \mathcal{O}(h^p)  ~~\mbox{from Theorem} (\ref{thmJ})\\ 
\end{align*}
Again for $ u \in C^{2,2/(p-1)}(\bar{\Omega}) \cap W^{3,1}(\Omega)$
\begin{align*}
&|\nabla ( u - \mathcal{P}_h u)| \le  C h| \mathcal{H}[u] |_{0,\infty,\mathcal{B}_h}\le Ch \mathcal{H} [u] + \mathcal{O}(h^{2/p})~~\mbox{where } \mathcal{H}[u]= |u_{xx}|+|u_{yy}|+|u_{xy}|\\
& \text{We know  for non negative  }x~, ~\psi_{\lambda}(x)= ( \lambda + x)^{p-2} x^2 ~\text{ is non decreasing.  } \\
&\text{ Therefore,}\\
& | u - \mathcal{P}_h u |^2_{(u_s,p)} = \int_{\Omega} \left( |\nabla u_s |+|\nabla ( u - \mathcal{P}_h u|\right) ^{p-2} | \nabla ( u - \mathcal{P}_h u ) |^2 \,dx\\
& \text{Clearly, right hand side is in the form of } \psi_{\lambda}(x) \text{ using the additional property } \frac{ \psi_{\lambda}(x+y)}{2} \le  \psi_{\lambda}(x) +  \psi_{\lambda}(y)\\
& \le Ch^2 \int_{\Omega} (| \nabla u_s | + Ch \mathcal{H}[u] )^{p-2} \mathcal{H}[u]^2 \,dx+ \mathcal{O}(h^{4/p})\\
& \le \mathcal{O}(h^2)
\end{align*}
Now for the case $ 2 < p < \infty$ we proceed in this way,
\begin{align*}
& | u - \mathcal{P}_h u |_{(u_s,p)} = \int_{\Omega} \left( |\nabla u_s|+|\nabla ( u - \mathcal{P}_h u|\right) ^{p-2} | \nabla ( u - \mathcal{P}_h u ) |^2 \,dx~~;~u \in W^{1,\infty} (\Omega) \cap W^{2,\alpha}(\Omega)\\
&\le \int_{\Omega} \left( |\nabla u_s |+|\nabla ( u - \mathcal{P}_h u|\right) ^{p-\alpha} | \nabla ( u - \mathcal{P}_h u ) |^{\alpha} \,dx~~~~\mbox{for } \alpha \in [1,2]\\
&\le C |\nabla ( u - \mathcal{P}_h u) |_{0,\alpha} \le \mathcal{O}(h^{\alpha})
\end{align*}
\qed
\end{proof}

\section{Quasi-Newtonian Fluids}
\label{secQ}
We now consider the following stationary non-Newtonian problem : \\
Find $(u,p)$  such that 
\begin{align}
\label{eq5}
- \sum_{j=1}^d\partial (a |D(\mathbf{u})| ^ 2) D_{ij}(\mathbf{u})/ \partial x_j + \partial p / \partial x_i & = \phi_i ~~\text{in} ~\Omega ~~~i=1,2,\ldots,d\\
\nabla \cdot u & = 0~~\text{in} ~\Omega, \\
u&=0 ~~\text{on} ~\partial \Omega 
\end{align}
where, $D(u)$ is the rate of deformation tensor with entries
\begin{align*}
D_{ij}(u) & := \frac{1}{2} \left( \frac{\partial u_i}{\partial x_j} + \frac{\partial u_j}{\partial x_i} \right)\\
\text{and}\\
|D(u)|^2 &:=  \sum_{i,j=1}^d \left[ D_{ij}(u) \right] ^ 2
\end{align*}
$\Omega$ is bounded and connected with Lipschitz boundary and $a \in C[0,\infty)$ is a positive 
satisfying
\begin{align}
\label{1}
| a_0 - a_{\infty}| \le | a(x) - a_{\infty} | \le | a_0 - a_{\infty}| \left( 1+ |x|^{1/2}\right)^{\ell}~~~   \ell \ge 0
\end{align}
where, $a_0=a(0)$ and $a_{\infty}= \sup_{x \in [0,\infty)} a(x)$.\\

Below we introduce a non linear functional ,

\begin{align*}
\mathcal{L}(v) \equiv \int_{\Omega} \left [\int_0^{|D(v)|^2 } a(s) \,ds  \right] - \sum_{i=1}^d  \langle \phi_i,v_i \rangle
\end{align*}

$\mathcal{L}$ has the following properties :
\begin{itemize}
\item{It is Gateaux differntiable}
\item{strictly convex}
\item{$\mathcal{L}'(\cdot)$ is strictly monotone}
\item{it is coercive}
\end{itemize}

 for more details readers are suggested  \cite{baranger1993numerical} and \cite{liu1994quasi}.\\

Let, $u \in (H_0^r(\Omega))^d =X $ and $ L_0^{r'}(\Omega)=Y$ where, $1/r+1/r' = 1$. ($H_0^r$ is the space of trace zero elements of $H^r$ and $L_0^{r'}$ is the mean zero $r'$ integrable functions.)
Consequently we have to find $ X $ such that,

\begin{align*}
\mathcal{A}(u,v): &= \int_{\Omega}\sum_{i,j=1}^d a \left(|D(u)|^2 \right) D_{ij}(u) D_{ij}(v)  =\langle \phi,v \rangle \equiv \sum_{i=1}^d \langle \phi_i,v_i \rangle\\
b(u,q) & = - \int_{\Omega} q ~\nabla \cdot u
\end{align*}
A finite element discretization of (\ref{eq5}) is based on the mixed weak formulation which seeks $(u,p) \in X \times Y := (H_0^r(\Omega))^d \times L_0^{r'}(\Omega)$  such that 
\begin{align}
\label{eq8}
 \int_{\Omega} \sum_{i,j=1}^d a \left(|D(u)|^2 \right) D_{ij}(u) D_{ij}(v) - \int_{\Omega} p ~  \nabla \cdot v &:=\langle \phi, v \rangle \\
\mathcal{A}(u,v) + b(v,p) &= \langle \phi ,v\rangle ~~~\forall v \in X\\
b(u,q)&=0~~~\forall q \in Y
\end{align}
We state the LBB condition for the existence-uniquenes of the solution.
\begin{theorem} \cite{amrouche1994weighted}
If for any $r \in(1,\infty)$ there exists a positive $c(q)$ such that
\begin{align*}
\inf_{ q \in L^{r'}_0(\Omega)} \sup_{w \in \left(H^{r}_0(\Omega)\right)^d} \frac{(q, \nabla \cdot w)}{||u||_{L^{r'}(\Omega)} ||w||_{H^{r}(\Omega)}} \ge c(q) >0
\end{align*} 
 then there exists a unique solution to (\ref{eq8})
\end{theorem}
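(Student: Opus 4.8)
The statement is a nonlinear saddle-point existence–uniqueness result, so the plan is to combine the monotone-operator theory available for the velocity component with the inf-sup (LBB) condition to recover the pressure. First I would isolate the constrained velocity problem by introducing the kernel of the divergence constraint,
\[
V := \{ v \in X : b(v,q) = 0 \ \forall q \in Y \} = \{ v \in X : \nabla \cdot v = 0 \},
\]
which is a closed subspace of the reflexive Banach space $X = (H_0^r(\Omega))^d$. For any test function $v \in V$ the coupling term $b(v,p)$ vanishes, so the mixed system (\ref{eq8}) restricted to $V$ collapses to the reduced problem: find $u \in V$ with $\mathcal{A}(u,v) = \langle \phi, v\rangle$ for all $v \in V$.

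Then I would solve this reduced problem from the stated structural properties of $\mathcal{L}$. Since $\mathcal{L}$ is strictly convex, coercive and Gateaux differentiable with derivative governing $\mathcal{A}$, and $\mathcal{A}(\cdot,\cdot)$ is strictly monotone, the restriction of $\mathcal{L}$ to the closed convex set $V$ attains a unique minimizer $u \in V$, whose Euler–Lagrange equation is exactly the reduced problem. Equivalently one may invoke the Browder–Minty theorem: $\mathcal{A}$, viewed as an operator $X \to X'$, is bounded (here the growth bound (\ref{1}) on $a$ guarantees $\mathcal{A}(u,\cdot) \in X'$), hemicontinuous, strictly monotone and coercive, hence a bijection onto $V'$, giving existence and uniqueness of $u$ independently of the pressure.

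Finally I would recover the pressure using the LBB condition. The residual functional $\ell(v) := \langle \phi, v\rangle - \mathcal{A}(u,v)$ lies in $X'$ and, by construction of $u$, annihilates $V$. The inf-sup inequality asserts that the operator $B : X \to Y'$ defined by $\langle Bv, q\rangle = b(v,q)$ has closed range and that its transpose $B'$ is injective with closed range equal to the annihilator $V^{\circ}$; this is the generalized de Rham / closed-range argument. Consequently there is a unique $p \in Y = L_0^{r'}(\Omega)$ with $b(\cdot, p) = \ell$, i.e. $\mathcal{A}(u,v) + b(v,p) = \langle \phi, v\rangle$ for all $v \in X$, while the constraint $b(u,q)=0$ holds since $u \in V$. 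Uniqueness of $p$ follows from the injectivity of $B'$ quantified by the inf-sup constant $c(q)$.

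The main obstacle is the nonlinearity combined with the non-Hilbertian $L^r$–$L^{r'}$ duality: the classical Brezzi splitting is stated for bilinear forms, so I must justify that the decoupling into ``solve on the kernel, then lift the pressure'' remains valid when $\mathcal{A}$ is only monotone rather than a continuous bilinear form. This requires that $V$ is complemented in the appropriate sense and that the closed-range theorem applies in reflexive Banach spaces — precisely what the inf-sup hypothesis supplies — while the coercivity of $\mathcal{L}$ and the growth control (\ref{1}) on $a$ keep $\mathcal{A}$ well defined and coercive on $V$ for general $r \in (1,\infty)$.
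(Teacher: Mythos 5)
The paper does not actually prove this statement: it is given with a bare citation to Amrouche--Girault--Giroire, so there is no in-paper argument to measure yours against. Your sketch is the standard route for nonlinear saddle-point problems --- solve the reduced problem on the divergence-free kernel $V$ by convex minimization or Browder--Minty, then lift the pressure through the inf-sup condition via the closed-range theorem in reflexive Banach spaces --- and it is structurally sound; this is essentially the argument one finds in the cited literature (and in Baranger--Najib--Sandri and Barrett--Liu for precisely this class of quasi-Newtonian problems).

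Two points deserve more care before the sketch becomes a proof. First, the coercivity, boundedness and hemicontinuity of $\mathcal{A}$ on $X=(H_0^r(\Omega))^d$ do \emph{not} follow from the growth condition (\ref{1}) as written: that condition only controls $a$ between two multiples of $|a_0-a_\infty|$, and one needs the exponent $\ell$ to be tied to $r$ (in the standard treatments one assumes $a(s)\asymp s^{(r-2)/2}$ so that $a(|D(u)|^2)D(u)$ lands in $L^{r'}$ and $\mathcal{L}$ is coercive on $X$); you acknowledge this in your closing paragraph but do not resolve it, and as stated for ``general $r\in(1,\infty)$'' the claim is not justified. Second, the inf-sup inequality as printed in the paper is not usable literally: the denominator contains $\|u\|_{L^{r'}(\Omega)}$ rather than $\|q\|_{L^{r'}(\Omega)}$, and the constant $c(q)$ is allowed to depend on $q$, which makes the condition vacuous as a uniform statement. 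You silently read it as the uniform LBB condition with a $q$-independent constant --- that is the correct reading and the one needed for the closed-range argument and for the quantitative uniqueness of $p$ --- but you should say explicitly that this is the hypothesis you are actually using.
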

\subsection{Error bounds for NUWEBS approximation}
Let, 
\begin{align*}
&\mathcal{B}_h \subset X \cap \left( W^{1,\infty}(\Omega)\right)^d ~~,~~ Y_h \subset Y \cap L^{\infty}(\Omega)\\&\text{and}~~ \mathcal{B}_h^0= \left \{ v_h \in \mathcal{B}_h ~:~ b(v_h,q_h)=0 ~\forall ~q_h \in Y_h \right \}
\end{align*}
be finite dimensional subspaces.So the corresponding approximation  problems: finding $(u_h,p_h) \in \mathcal{B}_h \times Y_h $ such that
\begin{align}
\label{eq9}
\mathcal{A}(u_h,v_h) + b(v_h,p_h) &= \langle \phi ,v_h \rangle ~~~\forall v_h \in \mathcal{B}_h\\
b(u_h,q_h)&=0~~~\forall q_h \in Y_h
\end{align}

\begin{itemize}
 \item [$\bullet$] Analogously we have the discrete version of the LBB condition.
 \begin{align*}
 \inf_{ q_h \in Y_h} \sup_{w_h \in \mathcal{B}_h} \frac{(q_h, \nabla \cdot w_h)}{||u_h||_{L^{r'}(\Omega)} ||w_h||_{H^{r}(\Omega)}} \ge c_h(q_h) >0
 \end{align*}

\item [$\bullet$] Approximation Property on $Y_h$ : \\

There is a continuous operator $ \Pi_h : L^{s}(\Omega) \longrightarrow  Y_h $ such that , 
  \begin{align*}
 || p - \Pi_h  p|| _{s} \preceq h^s ||p||_{s} ~~~p \in H^{s}(\Omega)
 \end{align*}

\end{itemize}

\begin{theorem}
Let, $ a$ satisfies (\ref{1}) , $(u,p) \in X \times Y$ be the unique solution of (\ref{eq8}) and $(u_h, p_h) \in X_h \times Y_h$ be the unique solution of (\ref{eq9}) . If the approximation property holds    on $Y_h$ then,  

\begin{align}
\label{eq13}
|| u -u_h||_X+ ||p-p_h||_2 \le \mathcal{O}(h)
\end{align}
\end{theorem}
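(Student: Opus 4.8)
The plan is to follow the standard framework for error analysis of mixed finite element approximations of monotone saddle-point problems, adapting it to the NU-WEBS setting. I would begin by splitting the velocity error through the canonical projector: write $u - u_h = (u - \mathrm{P}_h u) + (\mathrm{P}_h u - u_h) =: \eta + \xi$, where the approximation part $\eta$ is controlled directly by Jackson's inequality (Theorem~\ref{thmJ}), giving $\|\eta\|_X \preceq h$. The remaining work is to bound the discrete part $\xi \in \mathcal{B}_h$. The two structural ingredients I would extract from the listed properties of $\mathcal{L}$ together with the growth condition (\ref{1}) are: (i) \emph{strong monotonicity}, $\mathcal{A}(v,v-w)-\mathcal{A}(w,v-w) \succeq \|v-w\|_X^2$; and (ii) \emph{Lipschitz continuity} in the first argument, $|\mathcal{A}(v,z)-\mathcal{A}(w,z)| \preceq \|v-w\|_X \|z\|_X$. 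These should follow because $a \in C[0,\infty)$ is positive and (\ref{1}) pins its oscillation between $a_0$ and $a_\infty$, so that $\mathcal{A}'$ inherits the strict monotonicity and coercivity already recorded for $\mathcal{L}$.

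For the velocity estimate I would test the monotonicity inequality with $\xi = \mathrm{P}_h u - u_h$, obtaining $\|\xi\|_X^2 \preceq \mathcal{A}(\mathrm{P}_h u,\xi)-\mathcal{A}(u_h,\xi)$, and then split the right-hand side as $[\mathcal{A}(\mathrm{P}_h u,\xi)-\mathcal{A}(u,\xi)] + [\mathcal{A}(u,\xi)-\mathcal{A}(u_h,\xi)]$. The first bracket is handled by Lipschitz continuity and Jackson's inequality, yielding $\preceq h\|\xi\|_X$. For the second bracket I would invoke Galerkin orthogonality obtained by subtracting (\ref{eq9}) from (\ref{eq8}) tested against $v_h=\xi$, namely $\mathcal{A}(u,\xi)-\mathcal{A}(u_h,\xi) = -b(\xi,p-p_h)$. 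Restricting to the divergence-free subspace $\mathcal{B}_h^0$, so that $b(\xi,q_h)=0$ for all $q_h \in Y_h$, the pressure term collapses to $-b(\xi,p-\Pi_h p)$, which by the approximation property on $Y_h$ is bounded by $h\|\xi\|_X$. Collecting both brackets gives $\|\xi\|_X^2 \preceq h\|\xi\|_X$, hence $\|\xi\|_X \preceq h$ and $\|u-u_h\|_X \preceq h$ by the triangle inequality.

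For the pressure estimate I would exploit the discrete LBB condition. Writing $p_h - \Pi_h p \in Y_h$ and applying the condition, $\|p_h - \Pi_h p\|_2 \preceq \sup_{w_h} b(w_h,p_h-\Pi_h p)/\|w_h\|_X$. From the momentum equations one has $b(w_h, p - p_h) = \mathcal{A}(u_h,w_h)-\mathcal{A}(u,w_h)$, which is $\preceq h\|w_h\|_X$ by Lipschitz continuity and the velocity bound already proved; adding the consistency term $b(w_h,p-\Pi_h p) \preceq h\|w_h\|_X$ from the approximation property yields $\|p_h-\Pi_h p\|_2 \preceq h$. A final triangle inequality with $\|p-\Pi_h p\|_2 \preceq h$ gives $\|p-p_h\|_2 \preceq h$, and summing the two estimates produces (\ref{eq13}).

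The main obstacle I anticipate is establishing the strong monotonicity and Lipschitz bounds (i)--(ii) with constants that are uniform in $h$ and that genuinely follow from (\ref{1}) rather than being assumed. Unlike the linear variable-coefficient case, Cea's lemma is not directly available, so the growth and coercivity behaviour of $a(|D(\cdot)|^2)$ must be converted into quantitative two-sided bounds on the form $\mathcal{A}$; the upper bound in (\ref{1}), with its $(1+|x|^{1/2})^\ell$ factor, is precisely what should guarantee the continuity constant $\Lambda$ is finite, while the lower bound $|a_0-a_\infty| \le |a(x)-a_\infty|$ drives the coercivity. A secondary technical point is verifying that the weight-function mechanism of the web-spline space is compatible with the divergence constraint, i.e. that $\mathrm{P}_h u$ can be taken in $\mathcal{B}_h^0$ (or that the consistency error of relaxing this is itself $\mathcal{O}(h)$), which is where the NU-WEBS structure interacts nontrivially with the mixed formulation.
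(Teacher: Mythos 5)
Your argument is correct in outline and arrives at the same bound, but the velocity estimate follows a genuinely different route from the paper's. The paper argues variationally: it starts from strong convexity of the energy functional, $\|u-u_h\|_X^2 \preceq \mathcal{L}(u_h)-\mathcal{L}(u)-\langle \mathcal{L}'(u),u_h-u\rangle$, uses the fact that $u_h$ minimizes $\mathcal{L}$ over the discretely divergence-free set to replace $\mathcal{L}(u_h)$ by $\mathcal{L}(\mathrm{P}_h u)$, expands the difference as a $\tau$-integral of the nonlinearity, and absorbs the pressure coupling $\langle\nabla(\mathrm{P}_h u-u_h),\,p-\Pi_h p\rangle$ by Young's inequality, landing on $\|u-u_h\|_X^2\preceq\|u-\mathrm{P}_h u\|_X^2+\|p-\Pi_h p\|_2^2$ with no error splitting. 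You instead run the operator-theoretic dual of this: split off $\xi=\mathrm{P}_h u-u_h$, test strong monotonicity of $\mathcal{A}$ with $\xi$, and use Galerkin orthogonality plus the discrete divergence-free property to collapse the pressure term to $b(\xi,p-\Pi_h p)$. Since strong convexity of $\mathcal{L}$ is exactly strong monotonicity of $\mathcal{L}'$, both proofs rest on the same two unproved quantitative hypotheses --- an $h$-uniform coercivity exponent $2$ and a Lipschitz bound for $\mathcal{A}$, which for genuinely nonlinear $a$ hold only in quasi-norms in the sense of Barrett and Liu --- and the paper assumes them as silently as you do, so you lose no rigor relative to the source; your explicit flagging of this as the main obstacle is accurate. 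Both routes equally need your ``secondary technical point'' that $\mathrm{P}_h u$ is discretely divergence-free, which the paper supplies as property (\ref{2}) of the canonical projector. The pressure estimates are essentially identical (discrete LBB applied to $p_h-\Pi_h p$, the momentum equations, Lipschitz continuity of $\mathcal{A}$, and a final triangle inequality). What your version buys is independence from the minimization characterization of the discrete solution, so it would extend to mixed discretizations not equivalent to a constrained minimization; what the paper's buys is a marginally more direct bound on $\|u-u_h\|_X$ that never carries the intermediate quantity $\xi$ into the final estimate.
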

 \proof

We follows the technique of (\cite{barret1993finite})
\begin{align*}
|| u - u_h ||_X^2 &\le \mathcal{L}(u_h) - \mathcal{L}(u) - \langle \mathcal{L}'(u), u_h-u \rangle\\
& \le \mathcal{L}(w_h) - \mathcal{L}(u) - \langle \mathcal{L}'(u) , u_h -u \rangle\\
& \le \int_0^1 \left [ \int_{\Omega} \left|a \left( \left| D(u + \tau (\mathcal{P}_hu-u))\right|^2\right) D(u+\tau(\mathcal{P}_hu-u))- a \left( \left|D(u) \right|^2 \right)D(u) \right|  \right] \cdot |D(u-\mathcal{P}_hu)|\,d\tau \\
& + \langle \nabla ( \mathcal{P}_hu -u_h) , p -\Pi_h p \rangle\\
& \le ||u - \mathcal{P}_hu||_X^2 + || \mathcal{P}_hu-u_h||_X~||p -\Pi_h p|| _2 \\
& \le  ||u - \mathcal{P}_hu||_X^2 + \left( || u -u_h||_X + || u -\mathcal{P}_hu||_X \right) || p- \Pi_h p ||_2\\
& \le  ||u - \mathcal{P}_hu||_X^2 + \frac{1}{2}\left( || u -u_h||_X ^2  + || u -\mathcal{P}_hu||_X ^2 + 2|| p- \Pi_h p ||_2^2 \right) \\
&\text{Simplifying}\\
& \le 3 || u-\mathcal{P}_hu||_X^2 + 2|| p - \Pi_h p||^2_2
\end{align*} 
Again we have ,
\begin{align*}
&\langle p-p_h, \nabla \mathcal{P}_hu \rangle =  \langle \mathcal{A} u - \mathcal{A} u_h , \mathcal{P}_hu  \rangle\\
\implies & \langle p-\Pi_h p, \nabla \mathcal{P}_hu \rangle + \langle \Pi_h p - p_h , \nabla \mathcal{P}_hu \rangle =  \langle \mathcal{A} u - \mathcal{A} u_h , \mathcal{P}_hu  \rangle\\
\implies & \langle \Pi_h p - p_h , \nabla \mathcal{P}_hu \rangle =  \langle \mathcal{A} u - \mathcal{A} u_h , \mathcal{P}_hu  \rangle - \langle p - \Pi_h p , \nabla \mathcal{P}_hu \rangle \\
\implies & || p_h - \Pi_h p ||_2 ~ || \mathcal{P}_hu||  \le c_h^{-1} \sup \langle p_h - \Pi_h p, \nabla \mathcal{P}_hu \rangle \le c_h^{-1}\left[  \left| \langle \mathcal{A} u - \mathcal{A} u_h , \mathcal{P}_hu  \rangle \right| + \sup \left| \langle p - \Pi_h p , \nabla \mathcal{P}_hu \rangle \right| \right] \\
\implies & || p - p_h ||_2 \le \left( 1+ c_h^{-1} \right) || p - \Pi_h p ||_2 + c_h^{-1} \left[\int_{\Omega} \left|a \left( \left| D(u)\right|^2\right) D(u)- a \left( \left|D(u_h) \right|^2 \right)D(u_h) \right|^2 \right]^{1/2} \\
& \le \left( 1+ c_h^{-1} \right) || p - \Pi_h p ||_2 + c_h^{-1} || u -u_h||_X \\   
\end{align*}
Simplifying by using Jackson's Inequality and Approximation property we obtain the desired results.
\qed

\section{Conclusion}
We propose non uniform web spline based mesh free finite element method for p-Laplacian problems and quasi -Newtonian problem. We provide a priori error bounds in this context.In the future, analysis of this method will be extended to Navier Stokes
problem,or, second order wave equations and miscible dispalcement problems in
porous media by using weighted isogeometric method with NURBS basis. 

\end{document}